\newtheorem{thm}{Theorem}[section]
\newtheorem{lem}[thm]{Lemma}
\newtheorem{prop}[thm]{Proposition}
\newtheorem{conj}[thm]{Conjecture}
\newtheorem{notn}[thm]{Notation}
\newtheorem{example}[thm]{Example}
\theoremstyle{definition}
\theoremstyle{remark}
\numberwithin{equation}{section}
\newcommand{\per}{\text{\textnormal{per}}}
\newcommand{\sgn}{\text{\textnormal{sgn}}}
\newcommand{\Text}[1]{\text{\textnormal{#1}}}
\begin{document}

\title{A weak version of Rota's basis conjecture for odd dimensions}
\setlength{\parskip}{0in}
\author{Ron Aharoni}%
\address{Department of Mathematics, Technion, Haifa 32000, Israel}%
\email{ra@tx.technion.ac.il}%
\author{Daniel Kotlar}%
\address{Computer Science Department, Tel-Hai College, Upper Galilee 12210, Israel}%
\email{dannykot@telhai.ac.il}%
\setlength{\parskip}{0.075in}
\subjclass{68R05, 05B15, 05B20, 11C20, 15A15, 15A03, 52B40}
\keywords{Rota's basis conjecture, Alon-Tarsi conjecture, Latin square, Parity of Latin square}

\begin{abstract}
The Alon-Tarsi Latin square conjecture is extended to odd dimensions by stating it for reduced Latin squares (Latin squares having the identity permutation as their first row and first column). A modified version of Onn's colorful determinantal identity is used to show how the validity of this conjecture implies a weak version of Rota's basis conjecture for odd dimensions, namely that a set of $n$ bases in $\mathbb{R}^n$ has $n-1$ disjoint independent transversals.
\end{abstract}
\maketitle
\section{Rota's basis conjecture and some conjectures on Latin squares}\label{section1}
Given a family of sets $B_1,\ldots,B_n$, a \emph{transversal} is a set that contains exactly one element from each of the given sets. In 1989 G.-C. Rota made the following conjecture \cite{HuangRota94}:
\begin{conj}[\textbf{Rota's basis conjecture}]\label{conjRota}
Let $B_1,B_2,\ldots,B_n$ be bases of an $n$-dimensional vector space over an arbitrary field. Then, their multiset union can be repartitioned into $n$ transversals that are all bases.
\end{conj}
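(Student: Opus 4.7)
The plan is to attack Conjecture~\ref{conjRota} via the Alon--Tarsi polynomial method, handling even and odd $n$ separately. Writing $B_i=\{v_{i,1},\ldots,v_{i,n}\}$, I would introduce indeterminates encoding which vector of each basis is placed in which prospective transversal, and form a polynomial $P$ equal to the product of $n$ determinants, one per transversal. A valid repartition of $\bigcup_i B_i$ into $n$ bases corresponds to an assignment labelled by a Latin square on $[n]\times[n]$ at which $P$ does not vanish. By Alon's Combinatorial Nullstellensatz it therefore suffices to show that an appropriately chosen monomial coefficient of $P$ is nonzero in the ground field.

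For even $n$, Onn's colorful determinantal identity expresses this coefficient, up to a nonzero scalar, as $\bigl(\prod_i\det B_i\bigr)\cdot AT(n)$, where $AT(n)$ is the signed count (even minus odd) of Latin squares of order $n$. The Alon--Tarsi conjecture asserts $AT(n)\neq 0$ for even $n$; granting it, the even case of Rota follows at once. For odd $n$ this route fails because $AT(n)=0$ identically, the row-swap involution pairing even and odd Latin squares of odd order. My plan is then to follow the paper's strategy of replacing the sum over all Latin squares by a sum over \emph{reduced} Latin squares (identity first row and first column), which destroys the offending involution, and to derive a corresponding \emph{modification} of Onn's colorful identity that expresses the same coefficient in terms of this refined count. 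Nonvanishing of the reduced count would then yield odd-dimensional Rota by the identical Nullstellensatz argument.

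The main obstacle is unavoidable. The Alon--Tarsi conjecture for general even $n$ is open (known only for $n=p\pm 1$ with $p$ an odd prime, by Drisko and Glynn), and its reduced-Latin-square analogue for odd $n$ is equally conjectural; unconditionally the strategy cannot establish the full Rota. A secondary technical hurdle is to verify that, after the symmetry-breaking of fixing the first row and column, the modified colorful identity still cleanly factors the coefficient as $\prod_i\det B_i$ times a purely combinatorial term. If that factorization fails even by a single ``missing'' basis determinant, one only recovers a partial conclusion of the kind flagged in the abstract; if it holds, at the cost perhaps of a preliminary change of basis normalizing $B_1$ to the standard basis and a careful tracking of induced signs, then the full conjecture follows conditionally on the extended Alon--Tarsi nonvanishing.
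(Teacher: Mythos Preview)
The statement is Conjecture~\ref{conjRota}, which the paper does \emph{not} prove; it remains open. Your plan is essentially the route the paper itself takes, and the paper shows that it yields only the weak conclusion of Theorem~\ref{thm2}: conditionally on $rels(n)\neq rols(n)$ for odd $n$, one obtains $n$ transversals of which at least $n-1$ are bases, not $n$.

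The secondary obstacle you flag is decisive, and your proposed workaround does not succeed. Breaking the $S_n$-symmetry that causes the odd-$n$ cancellation forces one determinant in the identity to become a permanent: in the paper's Theorem~\ref{thm1} the right-hand side is $(n-1)!\,(rels(n)-rols(n))\,\per({}^1W)\prod_{j\ge 2}\det({}^jW)$, and correspondingly each summand on the left has the shape $\per(\,\cdot\,)\prod_{i\ge 2}\det(\,\cdot\,)$. A nonvanishing summand therefore certifies only $n-1$ independent transversals. Normalising $B_1$ to the standard basis makes $\per({}^1W)=\det({}^1W)=1$ on the right, but on the left the permanent factor is $\per\bigl({}^1W^{1},{}^2W^{\rho_2(1)},\ldots,{}^nW^{\rho_n(1)}\bigr)$, a mixed matrix drawing columns from all the $B_j$; its nonvanishing still says nothing about the linear independence of that transversal. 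Hence, even granting the reduced Alon--Tarsi nonvanishing for every $n$, your strategy delivers exactly Theorem~\ref{thm2} and not the full Conjecture~\ref{conjRota}.
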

The original conjecture was stated, in fact, in the more general setting of matroids. Some special cases of Conjecture~\ref{conjRota} were solved in \cite{AB06,Chan95,Chow95,Drisko97,Drisko98,Geelen06,Geelen07,Glynn10,HuangRota94,KZ05,Pon04,Wild94}.
Huang and Rota \cite{HuangRota94}, and independently Onn \cite{Onn97}, noticed a connection between the conjecture and number of even and odd Latin squares. Let $\mathcal{L}$ be the set of all Latin squares of size $n\times n$ over $\{1,\ldots,n\}$. For a Latin square $L\in\mathcal{L}$, we use the notation $L_i$ for its $i$th row and $L^j$ for its $j$th column. the \emph{sign}, or \emph{parity}, of $L$, denoted $\sgn(L)$, is defined as the product of the signs of all its row and column permutations, that is, $\sgn(L)=\prod_{i=1}^n\sgn(L_i)\sgn(L^i)$. A Latin square $L$ is \emph{even} if $\sgn(L)=1$, and \emph{odd} if $\sgn(L)=-1$. For a given dimension $n$, Let $els(n)$ denote the number of even Latin squares of order $n$, let $ols(n)$ denote the number of odd ones, and let $l(n)=els(n)-ols(n)$. For odd $n>1$ it is easy to see that $l(n)= 0$. For even $n$ and for a field of characteristic 0 Conjecture~\ref{conjRota} was shown in \cite{HuangRota94} and \cite{Onn97} to be a consequence of the following conjecture of Alon and Tarsi \cite{AlonTarsi92}:
\begin{conj}[\textbf{Alon-Tarsi Latin square conjecture}]\label{conjAlonTarsi}
For all even $n$, $l(n)\neq0$.
\end{conj}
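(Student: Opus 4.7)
The plan is to realize $l(n)$ as the value of a concrete algebraic object whose sign and non-vanishing can be analyzed by other means. My first step would be to express the signed count
\[
l(n) = \sum_{L \in \mathcal{L}} \sgn(L)
\]
as a coefficient in a natural polynomial identity (in the spirit of Alon and Tarsi's own derivation, which links $l(n)$ to list-edge-colorings of $K_{n,n}$ via the graph polynomial) or as a specialization of a permanent or determinant of a structured matrix. The Huang--Rota perspective, which identifies $l(n)$ up to sign with a signed count of decompositions of the identity into rank-one summands, gives a parallel algebraic handle, and Onn's determinantal identity gives yet a third. I would attempt to combine these with the polynomial method, hoping that a prescribed monomial in a suitable $n$-variable polynomial carries a non-zero coefficient equal to $\pm l(n)$.

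The second step is to construct a sign-reversing involution $\phi : \mathcal{L} \to \mathcal{L}$ that pairs off even and odd Latin squares outside a well-understood fixed-point set $F$, so that $l(n) = \sum_{L \in F} \sgn(L)$. Natural candidates include swapping two symbols inside a $2$-cycle of the product permutation $L_i L_j^{-1}$, reflecting $L$ across its main diagonal after a suitable relabelling, or cycling the rows of a chosen $2\times 2$ Latin subsquare. For the argument to go through, $F$ must admit an explicit structural description whose signed count is visibly non-zero when $n$ is even.

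The hard part, which is exactly why the conjecture has resisted proof since 1992, is the dependence on the parity of $n$: every natural involution I have in mind either fails to be well defined on all of $\mathcal{L}$ or has a fixed set whose signed count is essentially as hard to evaluate as the original $l(n)$. The partial results of Drisko for $n = p+1$ and Glynn for $n = p-1$ (with $p$ an odd prime) proceed through $p$-adic valuations of specific matrix permanents, and the failure of those techniques to cover general even $n$ is a sharp indication of where the real difficulty lies. Accordingly, the step on which I would expect to spend the bulk of the effort is the search for either a genuinely new algebraic identity expressing $l(n)$, or a representation-theoretic interpretation of $l(n)$ as a character value on an $S_n \times S_n$-module, whose non-vanishing could then be forced by symmetry rather than by a case-by-case involutive argument.
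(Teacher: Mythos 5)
This statement is Conjecture~\ref{conjAlonTarsi}, which is an open conjecture: the paper does not prove it, and neither do you. What you have written is a research programme, not a proof. Your first step (re-expressing $l(n)$ as a coefficient of a graph polynomial, as a signed count of rank-one decompositions \`a la Huang--Rota, or via Onn's identity) only produces equivalent reformulations of the quantity $l(n)$; none of them establishes non-vanishing. Your second step hinges entirely on a sign-reversing involution $\phi$ with an explicitly describable fixed-point set $F$ satisfying $\sum_{L\in F}\sgn(L)\neq 0$, but you never construct such a $\phi$, never identify $F$, and indeed you concede in your final paragraph that every candidate you can think of either is not well defined or has a fixed set as intractable as $l(n)$ itself. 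That concession is the gap, and it is the entire content of the conjecture.

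Two further cautions. First, the row-swap and column-swap involutions are genuinely sign-reversing only for odd $n$ (each swap flips $n+2$ of the $2n$ constituent permutation signs, an odd number exactly when $n$ is odd), which is why $l(n)=0$ for odd $n>1$ and why no such easy pairing can exist for even $n$; any involution you propose for even $n$ must be of an essentially different character. Second, the known cases (Drisko for $n=p+1$, Glynn for $n=p-1$, $p$ an odd prime) go through congruences for $l(n)$ modulo $p$, not through involutions or character theory, and they do not combine to cover general even $n$ --- Zappa's attempted reduction to these cases was shown by Glynn to be flawed, as the paper notes. If your goal is to engage with this paper specifically, note that it \emph{assumes} the variant $rels(n)\neq rols(n)$ for odd $n$ (Conjecture~\ref{conjSW}) as a hypothesis in Theorem~\ref{thm2}; nothing in the paper requires, or supplies, a proof of Conjecture~\ref{conjAlonTarsi}.
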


 No general result for Conjecture~\ref{conjRota} for odd $n$ has been presented yet. In this paper an analogue of the relation between Conjectures~\ref{conjAlonTarsi} and \ref{conjRota} is shown for odd $n$.

 The following terms and notation appear in \cite{Janssen95} and \cite{Zappa97}, among others. A Latin square is said to be \emph{reduced} if its first row and first column are the identity permutation. A Latin square is said to be \emph{normalized} if its first row is the identity permutation. A Latin square is said to be \emph{diagonal} if its diagonal consists solely of 1's. Let $rels(n)$, $rols(n)$, $ndels(n)$ and $ndols(n)$ be the numbers of reduced even Latin squares, reduced odd Latin squares, normalized diagonal even Latin squares and normalized diagonal odd Latin squares, respectively. Zappa \cite{Zappa97} introduced $AT(n)=ndels(n)-ndols(n)$ and proposed the following extension to Conjecture~\ref{conjAlonTarsi}:
\begin{conj}[\textbf{Extended Alon-Tarsi conjecture}]\label{conjZappa}
$AT(n)\neq0$ for every positive $n$.
\end{conj}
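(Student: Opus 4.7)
The plan is to adapt the Alon--Tarsi polynomial encoding to the constrained count $AT(n)$. Alon and Tarsi show, for the classical sum $l(n)$, that $n!\,l(n)$ equals the coefficient of $x_1^{n-1}\cdots x_n^{n-1}$ in $\prod_{i<j}(x_i-x_j)^2$. For odd $n$ this coefficient vanishes because transposing any two rows is a parity-reversing involution on the whole set of Latin squares; Zappa's $AT(n)$ has a chance of being nonzero precisely because normalizing the first row to the identity and pinning the diagonal to be constantly $1$ destroys both the row symmetry and the symbol symmetry that produce the cancellation.

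First I would rewrite $AT(n)$ as a signed sum
\[
AT(n)=\sum_{(\sigma_1,\ldots,\sigma_n)} \prod_{i=1}^{n}\sgn(\sigma_i),
\]
where $\sigma_1,\ldots,\sigma_n\in S_n$ are the column permutations, subject to $\sigma_1=\text{id}$, $\sigma_i(i)=1$ for all $i$, and the Latin compatibility $\sigma_i(k)\neq\sigma_j(k)$ for $i\neq j$. Next I would encode the Latin constraints algebraically, aiming to realize $AT(n)$ as the coefficient of a specific monomial in a modification of $\prod_{i<j}(x_i-x_j)^2$ -- for instance by multiplying by a factor that records the forced diagonal symbol and restricts the exponent shape. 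At that point one could try the Combinatorial Nullstellensatz, or an explicit root-counting argument along the lines of Drisko's resolution of the classical conjecture for $n=p+1$ and Glynn's for $n=p-1$ with $p$ an odd prime.

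The main obstacle is that Zappa's conjecture is at least as delicate as the classical Alon--Tarsi conjecture, which has been open since 1992 and is presently known only along the $n=p\pm1$ infinite families via algebraic/number-theoretic arguments that exploit the structure of $\mathbb{F}_p$. Realistically, the method sketched above would at best deliver an analogous infinite family of odd $n$ rather than the full conjecture. A sensible first task is therefore to verify the statement computationally for small $n$ in both parities (Zappa already reports confirming data), and to see whether there is a parity-preserving bijection between normalized diagonal Latin squares and a simpler combinatorial family whose signed enumeration is manifestly nonzero -- any such structural reduction would bypass the polynomial method entirely and would be the most promising route forward.
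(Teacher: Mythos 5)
There is nothing to compare your proposal against: the statement you were asked to prove is Conjecture~\ref{conjZappa}, which the paper states as an \emph{open conjecture} and does not prove. The only facts the paper records about it are that for even $n$ it is equivalent to the Alon--Tarsi conjecture (Conjecture~\ref{conjAlonTarsi}), and that Drisko proved it for odd prime $n$. Your write-up is consistent with this reality --- it is a research plan, not a proof --- but as a proof attempt it has a genuine and total gap: no step is actually carried out. The polynomial encoding of $AT(n)$ is never constructed, the Combinatorial Nullstellensatz application is never specified, and you concede yourself that the method would at best reproduce the known $n=p\pm1$ families rather than establish the conjecture for every positive $n$.

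Two concrete inaccuracies are worth flagging if you pursue this direction. First, your reformulation $AT(n)=\sum\prod_{i}\sgn(\sigma_i)$ over column permutations alone does not match the paper's definition of parity: $\sgn(L)=\prod_{i=1}^n\sgn(L_i)\sgn(L^i)$ involves the signs of \emph{both} the rows and the columns, and these do not cancel or coincide in general, so the signed count you propose to encode is not $AT(n)$. Moreover, $AT(n)$ counts \emph{normalized diagonal} squares (first \emph{row} equal to the identity and all diagonal entries equal to $1$), which in column-permutation language is $\sigma_i(1)=i$ and $\sigma_i(i)=1$, not $\sigma_1=\Text{id}$. Second, the cancellation argument you cite for odd $n$ (swapping two rows reverses parity) is exactly the paper's Lemma~\ref{lem1}(i) applied to the unrestricted count $l(n)$; it is correct, but it is the reason the paper must pass to reduced or normalized-diagonal counts in the first place, and it gives no leverage on $AT(n)$ itself. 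If your goal is to contribute to the paper's program, the productive target is not Conjecture~\ref{conjZappa} but Conjecture~\ref{conjSW} ($rels(n)\neq rols(n)$), since that is the hypothesis actually used in Theorem~\ref{thm2}; note the paper's warning that $AT(n)=rels(n)-rols(n)$ holds for even $n$ but not necessarily for odd $n$, so the two conjectures are not interchangeable in the odd case.
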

For even $n$ this conjecture is equivalent to Conjecture~\ref{conjAlonTarsi}. For odd $n$, Drisko \cite{Drisko98} proved the conjecture in the case that $n$ is prime. It was shown in \cite{Zappa97} that $AT(n)=rels(n)-rols(n)$ for even $n$, but this is not necessarily the case for odd $n$. An attempt in \cite{Zappa97} to prove the Alon Tarsi Conjecture for Latin squares of order $2^rc$, where $r>0$ and $c$ is an even integer for which the Alon-Tarsi conjecture is true, or $c$ is an odd integer such that the extended
Alon-Tarsi conjecture is true for $c$ and for $c+1$, was later shown to be incorrect by Glynn \cite{Glynn10}.

Another extension of Conjecture~\ref{conjAlonTarsi} to odd $n$ was recently suggested by Stones and Wanless \cite{StWan12}:
\begin{conj}\label{conjSW}
$rels(n)\neq rols(n)$ for all positive $n$.
\end{conj}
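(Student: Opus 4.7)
The plan is to split by the parity of $n$. For even $n$, Zappa's identity $AT(n) = rels(n) - rols(n)$, recorded in the excerpt, reduces Conjecture~\ref{conjSW} to Conjecture~\ref{conjZappa}, which for even $n$ is equivalent to the original Alon--Tarsi Conjecture~\ref{conjAlonTarsi}. This subsumes the even case into the large body of existing work on Alon--Tarsi, where many instances are already known. So the substantive case is odd $n$, where I would concentrate the effort.

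For odd $n$, the easy involution showing $els(n)=ols(n)$ --- swap rows $1$ and $2$ of a Latin square, multiplying its sign by $(-1)^n=-1$ --- fails on reduced squares because the swap destroys reducedness. My plan is to build a \emph{corrected}, sign-reversing involution $\phi$ on the set of reduced Latin squares: perform a chosen swap (of two rows, columns, or entries) on the ``interior'' of $L$ (positions with $i,j \geq 2$), then re-reduce by the unique pair $(\alpha,\beta)$ of column and row permutations restoring the first row and first column to the identity. The total sign change along $\phi$ is the product of the raw swap contribution $(-1)^n = -1$ and a correction $\sgn(\alpha)\sgn(\beta)$ coming from the re-reducing step. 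If this total equals $-1$ for every $L$ outside an explicitly described set $\mathcal{F}$ of ``symmetric'' reduced squares, then $rels(n) - rols(n) = \sum_{L \in \mathcal{F}} \sgn(L)$, shifting the entire weight of the conjecture onto the analysis of $\mathcal{F}$.

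The hard part, and the reason the conjecture remains open, is evaluating $\sum_{L \in \mathcal{F}} \sgn(L)$. For prime $n$ Drisko handled the analogous signed sum using the cyclic group structure of $\mathbb{Z}_n$; for composite odd $n$ (such as $9$, $15$, $21$) no directly analogous tool is available. A more realistic refinement of the plan would be either an inductive argument exploiting Latin subsquares of prime order dividing $n$, or an algebraic identity expressing $rels(n) - rols(n)$ as the evaluation of a non-zero polynomial in the spirit of Alon and Tarsi's original permanent--determinant approach. I expect the latter route to be more promising, though this is precisely where the genuine combinatorial difficulty of the conjecture resides, and is likely why the paper invokes Conjecture~\ref{conjSW} as a hypothesis toward Rota's basis conjecture rather than attempting to settle it.
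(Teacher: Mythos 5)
The statement you were asked to prove is Conjecture~\ref{conjSW}, which the paper does not prove and explicitly presents as open: it is attributed to Stones and Wanless, is reported as verified for odd $n$ only up to $n=7$, and enters the paper solely as a hypothesis in Theorem~\ref{thm2}. So there is no proof in the paper to compare against, and your proposal, as you concede in your final sentence, is not a proof either. The concrete gap is the entire second half of your plan: you reduce the odd case to evaluating $\sum_{L\in\mathcal{F}}\sgn(L)$ over the fixed-point set $\mathcal{F}$ of a hypothetical sign-reversing involution $\phi$, but you never define $\phi$ or $\mathcal{F}$, never verify that the total sign change off $\mathcal{F}$ is $-1$, and never evaluate the residual sum. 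Deferring the difficulty to $\mathcal{F}$ defers all of it.

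Worse, the natural instantiations of your ``interior swap then re-reduce'' map degenerate. If you swap two full rows $j,k\ge 2$ of a reduced square, the first row survives but the first column becomes the transposition $(j\,k)$, and the unique row permutation restoring it to the identity is $(j\,k)$ itself, so $\phi$ collapses to the identity map and there is no involution to speak of. If instead you swap rows $j$ and $k$ only in columns $2,\dots,n$, the result is generally not a Latin square: the new row $j$ keeps the symbol $j$ in column $1$ and acquires it again from wherever row $k$ held $j$. An analogous obstruction meets interior column swaps. So the sign-reversing map would require a genuinely new idea, not a correction term. Your even-$n$ paragraph is accurate --- the identity $AT(n)=rels(n)-rols(n)$ for even $n$ and the equivalence with Conjecture~\ref{conjAlonTarsi} are exactly what the paper records --- but that case is also open in general, so nothing is actually established.
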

For even $n$, Conjecture~\ref{conjSW} is equivalent to Conjectures~\ref{conjAlonTarsi} and \ref{conjZappa}. For odd $n$ Conjecture~\ref{conjSW} is only known to hold up to $n=7$ (see \cite{Zappa97}). We shall see in Section~\ref{section3}, Theorem~\ref{thm2}, that the assumption that $rels(n)\neq rols(n)$ for odd $n$ yields a weak version of Conjecture~\ref{conjRota}, namely the possibility of partitioning the multiset union of the original bases into $n$ transversals, of which at least $n-1$ are bases.


\section{A modified version of Onn's colorful determinantal identity}\label{section2}

The following identity is due to Onn \cite{Onn97}:
\begin{prop}[\textbf{Onn's colorful determinantal identity}]\label{CDI}
Let ${}^1W,{}^2W,\ldots,{}^nW$ be $n$ square matrices of order $n$ over a field $F$. Then
\begin{equation}\label{CDIFormula}
\sum_{\rho\in S^n}\sgn(\rho)\prod_{i=1}^n \mathrm{det}\left({}^1W^{\rho_1(i)},\ldots,{}^nW^{\rho_n(i)}\right)=l(n)\prod_{j=1}^n \mathrm{det}\left({}^jW\right)
\end{equation}
\end{prop}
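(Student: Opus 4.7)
The plan is to reduce the identity to the Leibniz expansion of the determinants on the left and then interpret the surviving terms as Latin squares.

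First, I would expand each inner determinant on the left by the Leibniz formula, introducing a permutation $\sigma_i\in S_n$ for each $i$:
\[
\det\bigl({}^1W^{\rho_1(i)},\ldots,{}^nW^{\rho_n(i)}\bigr)=\sum_{\sigma_i\in S_n}\sgn(\sigma_i)\prod_{j=1}^n\bigl({}^jW\bigr)_{\sigma_i(j),\,\rho_j(i)}.
\]
Substituting this into the left-hand side of (\ref{CDIFormula}) and interchanging the sums (everything is finite), the expression becomes
\[
\sum_{\sigma_1,\ldots,\sigma_n}\Bigl(\prod_{i}\sgn(\sigma_i)\Bigr)\sum_{\rho_1,\ldots,\rho_n}\Bigl(\prod_{j}\sgn(\rho_j)\Bigr)\prod_{i,j}\bigl({}^jW\bigr)_{\sigma_i(j),\,\rho_j(i)}.
\]

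Next, I would carry out the inner sum over $\rho$, observing that the $\rho_j$'s decouple across $j$. Indeed, for each fixed $j$ and fixed $\sigma_1,\ldots,\sigma_n$,
\[
\sum_{\rho_j\in S_n}\sgn(\rho_j)\prod_{i=1}^n\bigl({}^jW\bigr)_{\sigma_i(j),\,\rho_j(i)}=\det\bigl(N^{(j)}\bigr),
\]
where $N^{(j)}$ is the $n\times n$ matrix whose $i$-th row is the $\sigma_i(j)$-th row of ${}^jW$. Hence the inner sum equals $\prod_{j}\det(N^{(j)})$.

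The key observation is now that $\det(N^{(j)})$ vanishes as soon as two of the indices $\sigma_1(j),\ldots,\sigma_n(j)$ coincide. Therefore only those $\sigma$'s contribute for which the array $L$ defined by $L_{ij}=\sigma_i(j)$ has each column a permutation; combined with the rows being permutations (by definition of the $\sigma_i$), this says exactly that $L$ is a Latin square. For such $L$, the matrix $N^{(j)}$ is obtained from ${}^jW$ by the column permutation $L^j$ of $L$, so $\det(N^{(j)})=\sgn(L^j)\det({}^jW)$. Plugging back,
\[
\text{LHS}=\Bigl(\prod_{j=1}^n\det({}^jW)\Bigr)\sum_{L\in\mathcal L}\Bigl(\prod_{i}\sgn(L_i)\Bigr)\Bigl(\prod_{j}\sgn(L^j)\Bigr)=l(n)\prod_{j=1}^n\det({}^jW),
\]
which is exactly the right-hand side of (\ref{CDIFormula}).

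The main obstacle here is bookkeeping: one has to align the conventions so that the matrix built from $\sigma_1,\ldots,\sigma_n$ is indexed correctly and so that the sign coming from rearranging the rows of ${}^jW$ really is $\sgn(L^j)$ rather than its inverse. No essentially analytic or combinatorial difficulty appears beyond these index manipulations.
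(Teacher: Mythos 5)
Your proof is correct and follows essentially the same route as the paper: the double sum you obtain after Leibniz-expanding and interchanging summation is exactly the quantity $\triangle$ (without the restriction $\rho_1=\mathrm{id}$) that the paper evaluates in two ways in its proof of Proposition~\ref{refinedOnn}, which explicitly mimics Onn's original argument for Proposition~\ref{CDI}. The identification of the surviving $\sigma$'s with Latin squares and the sign bookkeeping $\det(N^{(j)})=\sgn(L^j)\det({}^jW)$ match the paper's treatment of $\triangle_\sigma$.
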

Where ${}^jW^i$ is the $i$th column of the matrix ${}^jW$, $S^n$ is the set of $n$-tuples over the symmetric group $S_n$ and for each such $n$-tuple $\rho=(\rho_1,\ldots,\rho_n)$, $\sgn(\rho)=\prod_{i=1}^n\sgn(\rho_i)$.

Based on (\ref{CDIFormula}) Onn argues as follows: Suppose $n$ is even and Conjecture~\ref{conjAlonTarsi} holds. If the columns of each of the matrices ${}^1W,{}^2W,\ldots,{}^nW$ form a basis and $\Text{char}(F)\nmid l(n)$  then the right hand side of (\ref{CDIFormula}) is nonzero and thus some term in the sum on the left hand side of (\ref{CDIFormula}) must be nonzero. Hence, there exists a colorful repartition of the multiset of column of the matrices ${}^iW$ consisting of bases. This implies Conjecture~\ref{conjRota} for a field of characteristic not dividing $l(n)$. For odd $n$ we know that $l(n)=0$ and thus we cannot conclude Rota's Conjecture~\ref{conjRota}. In fact, for odd $n$, the sum on the left hand side of (\ref{CDIFormula}) can be seen to be zero by a direct argument:

For any $\pi\in S_n$ and $\rho=(\rho_1,\ldots,\rho_n)\in S^n$, let $\rho\pi=(\rho_1\pi,\ldots,\rho_n\pi)$. Suppose $n$ is odd. Then,
\begin{equation}\label{eq0:1}
\begin{split}
\sgn(\rho\pi)\prod_{i=1}^n \mathrm{det} & \left({}^1W^{\rho_1\pi(i)},\ldots,{}^nW^{\rho_n\pi(i)}\right)
\\ &=\sgn(\rho)\sgn(\pi)^n\prod_{i=1}^n \mathrm{det}\left({}^1W^{\rho_1(i)},\ldots,{}^nW^{\rho_n(i)}\right)
\\ &=\sgn(\pi)\sgn(\rho)\prod_{i=1}^n \mathrm{det}\left({}^1W^{\rho_1(i)},\ldots,{}^nW^{\rho_n(i)}\right).
\end{split}
\end{equation}
It follows that each term in the sum on the left hand side of (\ref{CDIFormula}) appears $n!/2$ times with a positive sign and $n!/2$ times with a negative sign, and thus the sum on the left hand side of (\ref{CDIFormula}) is 0.

We shall modify the identity (\ref{CDIFormula}) so that the left hand side does not contain multiple terms. For this we take only elements of $S^n$ whose first component is the identity permutation. In this case the expression on the right hand side of (\ref{CDIFormula}) is divided by $n!$:
\begin{prop}[\textbf{Modified colorful determinantal identity}]\label{refinedOnn}
Let ${}^1W,{}^2W,\ldots,{}^nW$ be $n$ square matrices of order $n$ over a field. Then
\begin{equation}\label{eq7}
\sum_{\tiny\begin{array}{l}\rho\in S^n\\ \rho_1=\Text{id}\end{array}}\sgn(\rho)\prod_{i=1}^n \Text{det}\left({}^1W^i,{}^2W^{\rho_2(i)},\ldots,{}^nW^{\rho_n(i)}\right)=\frac{l(n)}{n!}\prod_{j=1}^n \Text{det}\left({}^jW\right).
\end{equation}
\end{prop}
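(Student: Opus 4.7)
The strategy is to deduce (\ref{eq7}) from Onn's identity (\ref{CDIFormula}) by a coset-counting argument for even $n$, and then supply a separate symmetry argument for odd $n$.

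First I would consider the right action of $S_n$ on $S^n$ given by $\rho\mapsto\rho\pi=(\rho_1\pi,\ldots,\rho_n\pi)$. Every orbit has size $n!$ and contains a unique element with $\rho_1=\Text{id}$. Within an orbit, the product of determinants in (\ref{CDIFormula}) is invariant, since the substitution $j=\pi(i)$ merely permutes the $n$ factors, while the sign transforms as $\sgn(\rho\pi)=\sgn(\pi)^n\sgn(\rho)$. For even $n$ this factor is $1$, so each orbit contributes $n!$ identical signed terms, giving $n!$ times the LHS of (\ref{eq7}); dividing Onn's identity by $n!$ then yields (\ref{eq7}).

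For odd $n$ the orbit argument only reproduces the vanishing (\ref{eq0:1}) of Onn's LHS, so an independent argument is needed; since $l(n)=0$ for odd $n>1$, the right side of (\ref{eq7}) is zero and it suffices to show that the left side, which I will call $R$, also vanishes. I would do this in two steps. Step one: the substitution $\rho_k\mapsto(a,b)\rho_k$ for a transposition $(a,b)\in S_n$ is a sign-reversing bijection whose net effect on the product is to swap columns $a$ and $b$ of the matrix ${}^kW$; hence $R$ is multilinear and alternating in the columns of ${}^kW$ for each $k\geq 2$, forcing $R=h({}^1W)\prod_{k=2}^n\det({}^kW)$ for some polynomial $h$. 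This form is manifestly symmetric in the matrices ${}^2W,\ldots,{}^nW$. Step two: swapping the two matrices ${}^aW$ and ${}^bW$ (for $a,b\geq 2$) inside the sum and then relabeling $\rho_a\leftrightarrow\rho_b$ interchanges the columns in positions $a$ and $b$ of each of the $n$ determinants, producing an overall factor $(-1)^n=-1$; hence $R$ is antisymmetric under such matrix swaps. Being simultaneously symmetric and antisymmetric, $R=0$.

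The main obstacle is that the natural coset-counting proof works only in the even case, so for odd $n$ one has to discover a second structural feature of $R$ that clashes with the symmetric product form of step one; once the antisymmetry of step two is in hand, vanishing is immediate.
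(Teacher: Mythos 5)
Your even-$n$ argument is essentially the one the paper itself uses: the authors also dispose of even $n$ in one sentence by noting, via the orbit argument of (\ref{eq0:1}), that each term of Onn's sum appears $n!$ times. Where you genuinely diverge is the odd case. The paper does not try to show directly that the left-hand side vanishes; it reruns Onn's double-counting proof under the constraint $\rho_1=\mathrm{id}$, computing the quantity $\triangle$ of (\ref{eq1}) once by rows and once by columns, and identifying the resulting coefficient as the signed count of Latin squares with a prescribed first column, which equals $l(n)/n!$ by Lemma~\ref{lem1}(ii). Your replacement --- showing that the left-hand side $R$ factors as $h({}^1W)\prod_{k\ge2}\det({}^kW)$ (multilinearity plus alternation in the columns of each ${}^kW$, obtained from the sign-reversing reindexing $\rho_k\mapsto(a,b)\rho_k$), hence is symmetric in ${}^2W,\dots,{}^nW$, while the relabelling $\rho_a\leftrightarrow\rho_b$ shows it is antisymmetric with factor $(-1)^n=-1$ --- is correct and arguably slicker: it never mentions Latin squares. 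What it does not produce is the intermediate machinery ($\triangle_\sigma$, the reduction to signed counts of Latin squares with prescribed first row and column) that the paper reuses verbatim in the proof of Theorem~\ref{thm1}, so the authors' longer route is not wasted effort.

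Two points need patching. First, both halves of your argument silently assume the characteristic is benign: dividing Onn's identity by $n!$ fails over a field of characteristic $p\le n$, and ``symmetric and antisymmetric implies zero'' fails in characteristic $2$. Both are cured by the standard remark that (\ref{eq7}) is a polynomial identity with integer coefficients in the $n^3$ entries, so it suffices to prove it over $\mathbb{Z}$ (where $l(n)/n!$ is an integer by Lemma~\ref{lem1}(ii)) and then specialize; you should say this explicitly, since the proposition is asserted over an arbitrary field and the paper's proof, which never divides inside the field, does not need the remark. Second, your odd-$n$ argument requires $n\ge3$ (you must choose $a\ne b$ with $a,b\ge2$) and invokes $l(n)=0$, which holds only for odd $n>1$; the case $n=1$ should be recorded as a trivial direct check.
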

We see that if $n$ is odd we still have zero on the right hand side and we still cannot conclude anything about Conjecture~\ref{conjRota}. However, Proposition~\ref{refinedOnn} is a first step in constructing an identity that does not vanish. We shall see in Section~\ref{section3} how Equation~(\ref{eq7}) can be modified so that the expression on the right hand side of (\ref{eq7}) becomes nonzero, thus obtaining a result for Conjecture~\ref{conjRota} for odd $n$.

\begin{proof}[Proof of Proposition~\ref{refinedOnn}] An argument similar to the one applied in (\ref{eq0:1}) shows that if $n$ is even, then every term on the right hand side of (\ref{CDIFormula}) appears $n!$ times. This implies (\ref{eq7}) for even $n$. If $n$ is odd this the result does not follow immediately.
The proof presented here mimics the proof in \cite{Onn97}, except that here $\rho_1=\Text{id}$. We use the notation ${}^{j}{W}_{k}^{l}$ for the entry in position $(k,l)$ in the matrix ${}^{j}{W}$. Let
\begin{equation}\label{eq1}
\triangle=\sum_{\tiny\begin{array}{l}\rho,\sigma\in S^n\\ \rho_1=\textrm{id}\end{array}}\sgn(\sigma)\sgn(\rho)\prod_{i,j=1}^n{}^{j}{W}_{\sigma_i(j)}^{\rho_j(i)}.
\end{equation}
We compute $\triangle$ in two different ways. For $\rho=(\Text{id},\rho_2,\ldots,\rho_n)$ let
\begin{equation*}
\begin{split}
\triangle^\rho &=\sum_{\sigma\in S^n}\sgn(\sigma)\prod_{i,j=1}^n{}^{j}{W}_{\sigma_i(j)}^{\rho_j(i)}
\\ &=\prod_{i=1}^n\sum_{\sigma_i\in S_n}\sgn(\sigma_i)\prod_{j=1}^n{}^{j}{W}_{\sigma_i(j)}^{\rho_j(i)}
\\ &=\prod_{i=1}^n \det\left({}^1W^i,{}^2W^{\rho_2(i)},\ldots,{}^nW^{\rho_n(i)}\right).
\end{split}
\end{equation*}
Applying the last equation to (\ref{eq1}) we have
\begin{equation}\label{eq5}
\begin{split}
\triangle &=\sum_{\tiny\begin{array}{l}\rho\in S^n\\ \rho_1=\textrm{id}\end{array}}\sgn(\rho)\triangle^\rho
\\ &=\sum_{\tiny\begin{array}{l}\rho\in S^n\\ \rho_1=\textrm{id}\end{array}}\sgn(\rho)\prod_{i=1}^n \det\left({}^1W^i,{}^2W^{\rho_2(i)},\ldots,{}^nW^{\rho_n(i)}\right).
\end{split}
\end{equation}
For $\sigma\in S^n$ let
\begin{equation*}
\begin{split}
\triangle_\sigma &= \sum_{\tiny\begin{array}{l}\rho\in S^n\\ \rho_1=\textrm{id}\end{array}}\sgn(\rho)\prod_{i,j=1}^n{}^{j}{W}_{\sigma_i(j)}^{\rho_j(i)}
\\ &=\left(\prod_{i=1}^n{}^{1}{W}_{\sigma_i(1)}^{i}\right)\prod_{j=2}^n\sum_{\rho_j\in S_n}\sgn(\rho_j)\prod_{i=1}^n{}^{j}{W}_{\sigma_i(j)}^{\rho_j(i)}
\\ &=\left(\prod_{i=1}^n{}^{1}{W}_{\sigma_i(1)}^{i}\right)\prod_{j=2}^n \det\left({}^jW_{\sigma_1(j)},{}^jW_{\sigma_2(j)},\ldots,{}^jW_{\sigma_n(j)}\right).
\end{split}
\end{equation*}
 Note that $\triangle_\sigma$ is nonzero only possibly for $\sigma=(\sigma_1,\ldots,\sigma_n)$ satisfying that for each $j=2,\ldots,n$, the set $\{\sigma_1(j),\ldots,\sigma_n(j)\}$ is equal to the set $\{1,\ldots,n\}$. In this case we must have that the set $\{\sigma_1(1),\ldots,\sigma_n(1)\}$ is also equal to the set $\{1,\ldots,n\}$. For each such $\sigma$ there exists $\pi_\sigma=(\pi_1,\ldots,\pi_n)\in S^n$ so that $\sigma_i(j)=\pi_j(i)$ for all $i,j=1,\ldots,n$. We have
\begin{equation}\label{eq4}
\begin{split}
\triangle_\sigma &=\left(\prod_{i=1}^n{}^{1}{W}_{\sigma_i(1)}^{i}\right)
\prod_{j=2}^n \det\left({}^jW_{\sigma_1(j)},\ldots,{}^jW_{\sigma_n(j)}\right)
\\ &=\left(\prod_{i=1}^n{}^{1}{W}_{\pi_1(i)}^{i}\right)
\prod_{j=2}^n \det\left({}^jW_{\pi_j(1)},\ldots,{}^jW_{\pi_j(n)}\right)
\\ &=\left(\prod_{i=1}^n{}^{1}{W}_{\pi_1(i)}^{i}\right)
\prod_{k=2}^n\sgn(\pi_k)\prod_{j=2}^n \det\left({}^jW\right)
\\ &=\sgn(\pi_\sigma)\prod_{j=2}^n \det\left({}^jW\right)
\sgn(\pi_1)\left(\prod_{i=1}^n{}^{1}{W}_{\pi_1(i)}^{i}\right).
\end{split}
\end{equation}
Each $\sigma$ as in (\ref{eq4}) defines a Latin square $L$ whose rows and columns are the elements of $\sigma$ and $\pi_\sigma$ respectively. thus $\sgn(L)=\sgn(\sigma)\sgn(\pi_\sigma)$. Substituting (\ref{eq4}) into (\ref{eq1}) we have:
\begin{equation}\label{eq4:1}
\begin{split}
\triangle &=\sum_{\sigma}\sgn(\sigma)\triangle_\sigma
\\ &=\sum_{\sigma}\sgn(\sigma)\sgn(\pi_\sigma)\prod_{j=2}^n \det\left({}^jW\right)\sgn(\pi_1)\left(\prod_{i=1}^n{}^{1}{W}_{\pi_1(i)}^{i}\right)
\\ &=\prod_{j=2}^n \det\left({}^jW\right)\sum_{L\in\mathcal{L}}\sgn(L)
\sgn(\pi_1)\left(\prod_{i=1}^n{}^{1}{W}_{\pi_1(i)}^{i}\right).
\end{split}
\end{equation}
Note that $\pi_1$ is the first column of the Latin square $L$. Now, instead of summing over all Latin squares and considering their first column $\pi_1$ we sum over all permutations $\pi_1$ and then over all Latin squares for which $\pi_1$ is their first column. Applying this change to (\ref{eq4:1}) we have

\begin{equation}\label{eq4:2}
\begin{split}
\triangle &=\prod_{j=2}^n \det\left({}^jW\right)\sum_{L\in\mathcal{L}}\sgn(L)
\sgn(\pi_1)\left(\prod_{i=1}^n{}^{1}{W}_{\pi_1(i)}^{i}\right)
\\ &=\prod_{j=2}^n \det\left({}^jW\right)\sum_{\pi_1\in S_n}\sgn(\pi_1)
\left(\prod_{i=1}^n{}^{1}{W}_{\pi_1(i)}^{i}\right)
\sum_{\tiny\begin{array}{l}L\in\mathcal{L}\\ \pi_1=L^1\end{array}}\sgn(L).
\end{split}
\end{equation}
We have
\begin{equation*}
\sum_{\pi_1\in S_n}\sgn(\pi_1)\left(\prod_{i=1}^n{}^{1}{W}_{\pi_1(i)}^{i}\right)=\Text{det}({}^1W),
\end{equation*}
and
\begin{equation*}
\sum_{\tiny\begin{array}{l}L\in\mathcal{L}\\ \pi_1=L^1\end{array}}\sgn(L)
\end{equation*}
is the number of even Latin squares with $\pi_1$ as their first column minus the number of odd ones. We claim that this number is equal to $l(n)/n!$ (see Lemma~\ref{lem1} part (ii) below). Assuming this, Equation~(\ref{eq4:2}) becomes
\begin{equation}\label{eq6}
\triangle =\prod_{j=2}^n \det\left({}^jW\right)\frac{l(n)}{n!}\det\left({}^1W\right)
= \frac{l(n)}{n!}\prod_{j=1}^n \det\left({}^jW\right).
\end{equation}
Combining (\ref{eq5}) and (\ref{eq6}) the result follows.
\end{proof}
To conclude this section we need a lemma. First some notation:
\begin{notn}\label{not2}
Let $\sigma,\pi\in S_n$, $i,j\in\{1,\ldots,n\}$
\begin{enumerate}
\item[\Text{(i)}] $l_{\sigma,i}(n)$ will denote the difference between the numbers of even and odd Latin squares with $\sigma$ as their $i$th row.
\item[\Text{(ii)}] $l^{\pi,i}(n)$ will denote the difference between the numbers of even and odd Latin squares with $\pi$ as their $i$th column.
\item[\Text{(iii)}] $l_{\sigma,i}^{\pi,j}(n)$ will denote the difference between the numbers of even and odd Latin squares with $\sigma$ as their $i$th row and $\pi$ as their $j$th column.
\end{enumerate}
\end{notn}
\begin{lem}\label{lem1}
Let $n$ be positive, $\pi\in S_n$, $1\leq i\leq n$
\begin{enumerate}
\item[\Text{(i)}] If $n\geq3$ is odd then $l_{\pi,i}(n)=l^{\pi,i}(n)=0$.
\item[\Text{(ii)}] For all $n$, $l_{\pi,1}(n)=l^{\pi,1}(n)=l(n)/n!$.
\end{enumerate}
\end{lem}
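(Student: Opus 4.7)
The plan is to exploit two natural maps on $\mathcal{L}$: transposition $L\mapsto L^T$ and the row-permutation action $L\mapsto \tau L$ defined by $(\tau L)_{i,j}=L_{\tau(i),j}$. Transposition preserves the product $\prod_i \sgn(L_i)\sgn(L^i)$ while swapping rows with columns, so it is a sign-preserving bijection sending Latin squares with $\pi$ as $i$th row to those with $\pi$ as $i$th column. This immediately yields $l_{\pi,i}(n)=l^{\pi,i}(n)$ in both parts, so throughout I would reduce to the row version.

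For part (i) I would specialise to $\tau=(i\,j)$ with $i\neq j$. Swapping two rows leaves the multiset of rows intact, so the product of row signs is unchanged, while in every column exactly two entries get swapped, so each of the $n$ column signs is multiplied by $-1$. Hence $\sgn(\tau L)=(-1)^n\sgn(L)$, which for odd $n$ equals $-\sgn(L)$. Since $\tau$ is a bijection between Latin squares having $\pi$ as row $i$ and those having $\pi$ as row $j$, we obtain $l_{\pi,i}(n)=-l_{\pi,j}(n)$ for all distinct $i,j$. With $n\geq 3$ one can pick three distinct indices $i,j,k$ and combine the three equalities $l_{\pi,i}=-l_{\pi,j}$, $l_{\pi,j}=-l_{\pi,k}$, $l_{\pi,i}=-l_{\pi,k}$ to force $l_{\pi,i}(n)=0$.

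For part (ii) the cases split by the parity of $n$. When $n\geq 3$ is odd, part (i) together with the classical vanishing $l(n)=0$ makes both sides equal to $0$; the case $n=1$ is a direct verification on the unique Latin square. When $n$ is even I would invoke the full $S_n$-action on rows: a short computation with the definitions shows that the rows of $\tau L$ are a reordering of those of $L$ and that each column satisfies $(\tau L)^j=L^j\circ\tau$, whence $\sgn(\tau L)=\sgn(\tau)^n\sgn(L)=\sgn(L)$. Moreover, if the first column of $L$ is the identity permutation, then the first column of $\tau L$ is exactly $\tau$. Hence this $S_n$-action identifies, sign-preservingly and freely, the Latin squares with first column $\Text{id}$ with those whose first column is $\pi$, giving $l^{\pi,1}(n)=l^{\Text{id},1}(n)$. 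Summing over $\pi\in S_n$ partitions $\mathcal{L}$ by first column, which yields $n!\cdot l^{\Text{id},1}(n)=l(n)$ and the claim.

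The only delicate step is the sign bookkeeping under the row-permutation action, namely confirming that the factor $\sgn(\tau)^n$ arises solely from right-composing each column permutation with $\tau$. Once that is pinned down, the rest is a short counting/involution argument.
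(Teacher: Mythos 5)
Your proposal is correct and follows essentially the same route as the paper: tracking how $\sgn(L)$ changes under row/column swaps and under the sign-preserving permutation action used to move between first rows/columns. The only cosmetic difference is in part (i), where the paper swaps two rows both distinct from $i$ to get a sign-reversing involution directly on the set of squares with $\pi$ as $i$th row, whereas you swap row $i$ with another row and deduce vanishing from the antisymmetry relations $l_{\pi,i}=-l_{\pi,j}$ among three indices; both arguments need $n\ge 3$.
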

\begin{proof}
(i) For a given $i$, fix some $j$ and $k$ different from $i$. For any Latin square with $\pi$ as its $i$th row we can obtain a Latin square with the opposite parity by exchanging the $j$th and $k$th rows. Similarly for columns.

(ii) If $n$ is odd then $l_{\pi,1}(n)=l^{\pi,1}(n)=l(n)=0$ by part (i). Suppose $n$ is even and let $\pi_1,\pi_2\in S_n$. Let $L$ be a Latin square containing $\pi_1$ as its $i$th row. By applying the permutation $\pi_2\circ\pi_1^{-1}$ on the rows of $L$ we obtain a Latin square with the same parity containing $\pi_2$ as its $i$th row. Thus $l_{\pi_1,1}(n)=l_{\pi_2,1}(n)=l(n)/n!$, and similarly for columns.
\end{proof}

\section{A weak case of Rota's basis conjecture for odd $n$}\label{section3}
We saw in Section~\ref{section2} that the colorful determinantal identity cannot be used to conclude anything about Rota's basis conjecture for odd dimensions. In this section we shall invert the signs of half of the terms in the sum on the left hand side of Equation~(\ref{eq7}) so that the sum on the right hand side will not vanish.

Recall Notation \ref{not2}. We have
\begin{lem}\label{lem2}
Let $\sigma, \pi \in S_n$.
\begin{enumerate}
\item[\Text{(i)}] if $n$ is even then $l_{\sigma,1}^{\pi,1}(n)=rels(n)-rols(n)$.
\item[\Text{(ii)}] if $n$ is odd then $l_{\sigma,1}^{\pi,1}(n)=\sgn(\sigma)\sgn(\pi)(rels(n)-rols(n))$.
\end{enumerate}
\end{lem}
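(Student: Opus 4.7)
The plan is to construct an explicit sign-tracking bijection between the set of Latin squares with first row $\sigma$ and first column $\pi$ and the set of reduced Latin squares; both parts of the lemma will then follow by reading off the parity change. I will assume $\sigma(1)=\pi(1)$, since otherwise (by comparing entries at $(1,1)$) the set on the left is empty and the identity is applied only vacuously in the context of Section~\ref{section3}. Set $\tau:=\sigma^{-1}\pi$, so that $\tau(1)=1$.

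The bijection $L\mapsto L''$ is the composition of two moves. First, apply the symbol permutation $\sigma^{-1}$ to every entry of $L$, producing $L'$ with $L'_{i,j}=\sigma^{-1}(L_{i,j})$; this turns the first row of $L'$ into the identity and the first column into $\tau$. Second, reorder the rows of $L'$ by $\tau^{-1}$, defining $L''_{j,k}=L'_{\tau^{-1}(j),k}$. Because $\tau(1)=1$, this preserves the first row while sending the first column to the identity, so $L''$ is reduced. The map is invertible (apply the row permutation $\tau$, then the symbol permutation $\sigma$), so it is indeed a bijection.

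Now I would track the parity. Applying a symbol permutation $\alpha$ replaces each row $L_i$ by $\alpha\circ L_i$ and each column $L^j$ by $\alpha\circ L^j$, so $\sgn(L)$ acquires a factor $\sgn(\alpha)^{2n}=1$; hence $\sgn(L')=\sgn(L)$. A row permutation $\beta$ merely reindexes the rows (their sign-product is unchanged), while each $L^j$ is replaced by $L^j\circ\beta$, contributing $\sgn(\beta)^n$; with $\beta=\tau^{-1}$ this yields $\sgn(L'')=(\sgn(\sigma)\sgn(\pi))^n\sgn(L)$. For $n$ even this exponent equals $1$, so the bijection preserves parity and part (i) follows; for $n$ odd it collapses to $\sgn(\sigma)\sgn(\pi)$, yielding part (ii). The main obstacle is exactly this sign bookkeeping: one must be careful that symbol permutations cancel out (the $\sgn(\alpha)$ factors come in pairs, one from the rows and one from the columns) while a row permutation contributes a single $\sgn(\beta)^n$ factor from the columns alone.
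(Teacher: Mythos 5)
Your proof is correct and follows essentially the same route as the paper's: both map the Latin squares with first row $\sigma$ and first column $\pi$ bijectively onto the reduced Latin squares by an isotopy and track how the parity changes. The only differences are that you pick a slightly different isotopy (symbol permutation $\sigma^{-1}$ followed by the row permutation $\tau^{-1}$, with no column permutation) and carry out the sign bookkeeping explicitly, whereas the paper delegates that computation to Proposition 3.1 of Janssen; your explicit restriction to the compatible case $\sigma(1)=\pi(1)$ (outside of which the left-hand side vanishes) is a point the paper leaves implicit.
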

Before proving the lemma, recall from \cite{Janssen95} that an \emph{isotopy} is a triple $(\alpha,\beta,\gamma)$  such that $\alpha,\beta,\gamma\in S_n$ and it acts on a Latin square $L$ by applying $\alpha$ on the set of rows, $\beta$ on the set of columns, and $\gamma$ on the symbols of the square.
\begin{proof}[Proof of Lemma~\ref{lem2}]
Let $L$ be a Latin square containing $\sigma$ as its first row and $\pi$ as its first column. If $L_{1,1}=k\neq1$, we apply the inversion $\gamma=(1,k)$ on the set $\{1,\ldots,n\}$ to obtain a square with 1 as its $(1,1)$ entry (If $L_{1,1}=1$ then $\gamma=\Text{id}$). Now apply a permutation  $\alpha$ on the rows and a permutation $\beta$ on the columns to obtain a reduced Latin square. Since $\alpha$ and $\beta$ are determined by $\pi$ and $\sigma$ respectively, the isotopy $(\alpha,\beta,\gamma)$ can be applied on any square containing $\sigma$ as its first row and $\pi$ as its first column, to obtain a reduced square. Now $\alpha$ and $\beta$ have the same parity if and only if $\sigma$ and $\pi$ have the same parity (since $\alpha\circ\gamma=\pi^{-1}$ and $\beta\circ\gamma=\sigma^{-1}$). According to Proposition 3.1 in \cite{Janssen95} the resulting square and the original square have opposite parities only in the case that $n$ is odd and $\sgn(\sigma)=-\sgn(\pi)$.
\end{proof}

\begin{thm}\label{thm1}
Let ${}^1W,{}^2W,\ldots,{}^nW$ be $n$ square matrices of order $n$ over a field, where $n$ is odd. Then,
\begin{equation}\label{eq8}
\begin{split}
\sum_{\tiny\begin{array}{l}\rho\in S^n\\ \rho_1=\Text{id}\end{array}}\sgn(\rho)
\per\left({}^1W^1,{}^2W^{\rho_2(1)},\ldots,{}^nW^{\rho_n(1)}\right)
\prod_{i=2}^n\Text{det}\left({}^1W^i,{}^2W^{\rho_2(i)},\ldots,{}^nW^{\rho_n(i)}\right)
\\ =(n-1)!\cdot (rels(n)-rols(n))\per\left({}^1W\right)\prod_{j=2}^n \Text{det}\left({}^jW\right),
\end{split}
\end{equation}
where $\per(W)$ denotes the permanent of the matrix $W$.
\end{thm}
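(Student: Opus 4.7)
The plan is to mirror the double-counting proof of Proposition~\ref{refinedOnn}, making one surgical change: drop the factor $\sgn(\sigma_1)$ from the weight used in the auxiliary sum. On one side this converts the $i=1$ determinant into a permanent; on the other side it leaves an extra $\sgn(L_1)\sgn(L^1)$ inside the Latin-square sum, which is exactly what cancels the $\sgn(\sigma_1)\sgn(\pi_1)$ factor that Lemma~\ref{lem2}(ii) produces for odd $n$. That cancellation is the whole point: it is what prevents the right hand side from vanishing, as it does in (\ref{eq7}).

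Concretely, set
$$\triangle'=\sum_{\tiny\begin{array}{l}\rho,\sigma\in S^n\\ \rho_1=\Text{id}\end{array}}\sgn(\rho)\left(\prod_{i=2}^n\sgn(\sigma_i)\right)\prod_{i,j=1}^n{}^{j}{W}_{\sigma_i(j)}^{\rho_j(i)}.$$
Performing the $\sigma$-sum first, index by index, reproduces the left hand side of (\ref{eq8}): for $i=1$, $\sum_{\sigma_1\in S_n}\prod_{j}{}^jW_{\sigma_1(j)}^{\rho_j(1)}$ equals $\per({}^1W^1,{}^2W^{\rho_2(1)},\ldots,{}^nW^{\rho_n(1)})$ (using $\rho_1=\Text{id}$), while for $i\ge 2$ the $\sgn(\sigma_i)$ factor turns the $\sigma_i$-sum into the corresponding determinant.

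Performing the $\rho$-sum first mimics the computation of $\triangle_\sigma$ in the proof of Proposition~\ref{refinedOnn}: exactly the same $\sigma$'s contribute---those arising from a Latin square $L$ via $\sigma_i(j)=\pi_j(i)$---and the only difference is one extra sign. Pulling $\sgn(\pi_j)$ out of each $j\ge 2$ determinant and using $\sgn(L)=\sgn(\sigma)\sgn(\pi_\sigma)$ together with $\prod_{i=2}^n\sgn(\sigma_i)=\sgn(\sigma)\sgn(\sigma_1)$ and $\prod_{j=2}^n\sgn(\pi_j)=\sgn(\pi_\sigma)\sgn(\pi_1)$, the signs collapse to
$$\triangle'=\prod_{j=2}^n\det({}^jW)\sum_{L\in\mathcal{L}}\sgn(L)\sgn(L_1)\sgn(L^1)\prod_{i=1}^n{}^1W_{L^1(i)}^i.$$

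Finally regroup the Latin-square sum by $(L_1,L^1)=(\sigma_1,\pi_1)$ and invoke Lemma~\ref{lem2}(ii): for odd $n$, $\sum_{L:\,L_1=\sigma_1,\,L^1=\pi_1}\sgn(L)=\sgn(\sigma_1)\sgn(\pi_1)(rels(n)-rols(n))$, which exactly cancels the $\sgn(L_1)\sgn(L^1)$ already present. What remains is $(rels(n)-rols(n))\sum_{\sigma_1,\pi_1:\,\sigma_1(1)=\pi_1(1)}\prod_i{}^1W_{\pi_1(i)}^i$; for each $\pi_1\in S_n$ the constraint $\sigma_1(1)=\pi_1(1)$ allows $(n-1)!$ choices of $\sigma_1$, and then summing over $\pi_1$ produces $\per({}^1W)$. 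Multiplying by $\prod_{j=2}^n\det({}^jW)$ yields the right hand side of (\ref{eq8}). The main delicacy, and the reason the identity is nontrivial precisely where Propositions~\ref{CDI} and \ref{refinedOnn} vanish, is this matching of the $\sgn(L_1)\sgn(L^1)$ born from the algebraic manipulation with the $\sgn(\sigma_1)\sgn(\pi_1)$ produced by Lemma~\ref{lem2}(ii); any other choice of which single $\sgn(\sigma_i)$ to drop would leave an un-cancelled sign and collapse the sum.
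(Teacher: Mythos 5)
Your proposal is correct and follows essentially the same route as the paper's proof: both define the auxiliary sum $\triangle'$ by dropping the factor $\sgn(\sigma_1)$ (equivalently weighting by $\prod_{i=2}^n\sgn(\sigma_i)$), evaluate it once by $\rho$ to get the left hand side of (\ref{eq8}) and once by $\sigma$ to reduce to a Latin-square sum, and then apply Lemma~\ref{lem2}(ii) so that the $\sgn(L_1)\sgn(L^1)$ factor cancels against $\sgn(\sigma_1)\sgn(\pi_1)$, leaving $(n-1)!\,(rels(n)-rols(n))\per({}^1W)\prod_{j=2}^n\det({}^jW)$. No substantive differences from the paper's argument.
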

\begin{proof}
On the left hand side of (\ref{eq7}) we have a sum of $(n!)^{n-1}$ terms each consisting of the product of $n$ determinants. If we omit the signs in the first determinant of each such term we get the product of $n-1$ determinants and one permanent. We shall see that this can be achieved by omitting $\sgn(\sigma_1)$ in (\ref{eq1}). We denote the resulting expression  by $\triangle^{\prime}$ instead of $\triangle$ and Equation~(\ref{eq1}) takes the following form:
\begin{equation}\label{eq2_0}
\begin{split}
\triangle^{\prime} &=\sum_{\tiny\begin{array}{l}\rho,\sigma\in S^n\\ \rho_1=\textrm{id}\end{array}}
\frac{\sgn(\sigma)}{\sgn(\sigma_1)}\sgn(\rho)\prod_{i,j=1}^n{}^{j}{W}_{\sigma_i(j)}^{\rho_j(i)}
\\ &=\sum_{\tiny\begin{array}{l}\rho,\sigma\in S^n\\ \rho_1=\textrm{id}\end{array}}
\sgn(\sigma_1)\sgn(\sigma)\sgn(\rho)\prod_{i,j=1}^n{}^{j}{W}_{\sigma_i(j)}^{\rho_j(i)}.
\end{split}
\end{equation}
We can compute $\triangle^{\prime}$ in two different ways. Taking the external sum in (\ref{eq2_0}) by $\rho$ we obtain
\begin{equation}\label{eq2_1}
\begin{split}
\triangle^{\prime} &=\sum_{\tiny\begin{array}{l}\rho\in S^n\\ \rho_1=\Text{id}\end{array}}\sgn(\rho)
\sum_{\sigma\in S^n}\sgn(\sigma_1)\sgn(\sigma)\prod_{i,j=1}^n{}^{j}{W}_{\sigma_i(j)}^{\rho_j(i)}
\\ &=\sum_{\tiny\begin{array}{l}\rho\in S^n\\ \rho_1=\Text{id}\end{array}}\sgn(\rho)
\left(\sum_{\sigma_1\in S_n}\prod_{j=1}^n{}^{j}{W}_{\sigma_1(j)}^{\rho_j(1)}\right)
\left(\prod_{i=2}^n\sum_{\sigma_i\in S_n}\sgn(\sigma_i)\prod_{j=1}^n{}^{j}{W}_{\sigma_i(j)}^{\rho_j(i)}\right)
\\ &=\sum_{\tiny\begin{array}{l}\rho\in S^n\\ \rho_1=\Text{id}\end{array}}\sgn(\rho)
\per\left({}^1W^1,{}^2W^{\rho_2(1)},\ldots,{}^nW^{\rho_n(1)}\right)
\prod_{i=2}^n\Text{det}\left({}^1W^i,{}^2W^{\rho_2(i)},\ldots,{}^nW^{\rho_n(i)}\right).
\end{split}
\end{equation}
Taking the external sum in (\ref{eq2_0}) by $\sigma$ and substituting $\triangle_\sigma$ from (\ref{eq4}) we obtain
\begin{equation}\label{eq2_2}
\begin{split}
\triangle^\prime &=\sum_{\sigma}\sgn(\sigma_1)\sgn(\sigma)\triangle_\sigma
\\ &=\sum_{\sigma}\sgn(\sigma_1)\sgn(\sigma)\sgn(\pi_\sigma)\prod_{j=2}^n \det\left({}^jW\right)\sgn(\pi_1)\left(\prod_{i=1}^n{}^{1}{W}_{\pi_1(i)}^{i}\right).
\end{split}
\end{equation}
In a similar manner as (\ref{eq4:1}) was derived from (\ref{eq4}) equation (\ref{eq2_2}) yields
\begin{equation}\label{eq2_2:1}
\triangle^\prime =\prod_{j=2}^n \det\left({}^jW\right)\sum_{L\in\mathcal{L}}\sgn(\sigma_1)\sgn(L)
\sgn(\pi_1)\left(\prod_{i=1}^n{}^{1}{W}_{\pi_1(i)}^{i}\right),
\end{equation}
where $\sigma_1$ is the first row of $L$ and $\pi_1$ is the first column. Now, instead of summing over all Latin squares, we sum over all possible first columns, then over all possible first rows, with the given first column, and then over all Latin squares with these first row and first column. The sum in (\ref{eq2_2:1}) becomes
\begin{equation}\label{eq2_2:2}
\triangle^\prime =\prod_{j=2}^n \det\left({}^jW\right)
\sum_{\pi_1\in S_n} \sgn(\pi_1)\left(\prod_{i=1}^n{}^{1}{W}_{\pi_1(i)}^{i}\right)
\sum_{\tiny\begin{array}{c}\sigma_1\in S_n\\ \pi_1(1)=\sigma_1(1)\end{array}}\sgn(\sigma_1)
\sum_{\tiny\begin{array}{c}L\in\mathcal{L}\\ \sigma_1=L_1\\ \pi_1=L^1\end{array}}\sgn(L).
\end{equation}
For any $\pi_1,\sigma_1\in S_n$ we have, by Lemma~\ref{lem2},
\begin{equation*}
\sgn(\pi_1)\sgn(\sigma_1)
\sum_{\tiny\begin{array}{c}L\in\mathcal{L}\\ \sigma_1=L_1\\ \pi_1=L^1\end{array}}\sgn(L)=rels(n)-rols(n).
\end{equation*}
Also, for each $\pi_1\in S_n$ there are $(n-1)!$ permutations $\sigma_1\in S_n$ satisfying $\pi_1(1)=\sigma_1(1)$. Applying this to (\ref{eq2_2:2}) gives
\begin{equation}\label{eq2_3}
\begin{split}
\triangle^\prime &=\prod_{j=2}^n \det\left({}^jW\right)(n-1)!\cdot (rels(n)-rols(n))\sum_{\pi_1\in S_n}\left(\prod_{i=1}^n{}^{1}{W}_{\pi_1(i)}^{i}\right)
\\ &=\prod_{j=2}^n \det\left({}^jW\right)(n-1)!\cdot (rels(n)-rols(n))\per\left({}^1W\right).
\end{split}
\end{equation}
Combining (\ref{eq2_1}) and (\ref{eq2_3}) the result follows.
\end{proof}
We can now obtain a weak version of Rota's basis conjecture for odd $n$:
\begin{thm}\label{thm2}
Suppose $n$ is odd and $rels(n)\neq rols(n)$. If $B_1,B_2,\ldots,B_n$ are bases of a vector space of dimension $n$ over a field of characteristic 0, then their multiset union can be partitioned into $n$ transversals, such that at least $n-1$ of them are bases.
\end{thm}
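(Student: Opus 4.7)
The plan is to apply Theorem~\ref{thm1} to matrices ${}^1W,\ldots,{}^nW$ whose columns are the vectors of $B_1,\ldots,B_n$ respectively, and then to read off the required partition from any nonzero term on the left-hand side of (\ref{eq8}).

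The first step is a coordinate-change reduction. Both the hypothesis (each $B_j$ is a basis) and the conclusion (the existence of a partition into transversals, most of which are bases) are invariant under any invertible linear map $T$ of the ambient vector space, since $T$ sends bases to bases and transversals to transversals. Applying $T=({}^1W)^{-1}$ coordinate-wise, we may therefore assume without loss of generality that ${}^1W=I$, in which case $\per({}^1W)=1$. In addition $\det({}^jW)\neq 0$ for every $j$, since each $B_j$ is a basis.

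The second step is to evaluate the right-hand side of (\ref{eq8}) under this reduction. It becomes
\begin{equation*}
(n-1)!\cdot(rels(n)-rols(n))\cdot\per(I)\cdot\prod_{j=2}^n\det\left({}^jW\right),
\end{equation*}
which is nonzero: we work in characteristic $0$, the hypothesis $rels(n)\neq rols(n)$ makes the integer factor nonzero, and the determinants are nonzero. Consequently at least one term on the left-hand side of (\ref{eq8}) is nonzero, corresponding to some $\rho=(\Text{id},\rho_2,\ldots,\rho_n)\in S^n$.

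Finally, for $i=1,\ldots,n$ set $T_i=\{{}^1W^i,\,{}^2W^{\rho_2(i)},\ldots,{}^nW^{\rho_n(i)}\}$. Because each $\rho_j$ is a permutation, $T_1,\ldots,T_n$ partition the multiset union of $B_1,\ldots,B_n$ into transversals. Since the selected term is nonzero, each of its factors is nonzero; in particular $\det\left({}^1W^i,\,{}^2W^{\rho_2(i)},\ldots,{}^nW^{\rho_n(i)}\right)\neq 0$ for every $i\geq 2$, so $T_2,\ldots,T_n$ are all bases, yielding the desired $n-1$ basis transversals. The remaining transversal $T_1$ only contributes a nonzero \emph{permanent} and need not be a basis, which is precisely what accounts for the word \emph{weak} in the conclusion. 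The one real subtlety is the reduction in Step 1: one must recognize that the problem is $\mathrm{GL}_n$-invariant, for otherwise the factor $\per({}^1W)$ on the right-hand side of (\ref{eq8}) could vanish and the argument would collapse exactly as it does for the unmodified identity (\ref{eq7}) when $n$ is odd.
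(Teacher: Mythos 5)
Your proof is correct, and its first half (setting up the matrices, invoking Theorem~\ref{thm1}, and reading off $n-1$ basis transversals from a nonzero term of the left-hand side of (\ref{eq8})) is exactly the paper's argument. Where you genuinely diverge is in disposing of the possibility that $\per({}^1W)=0$. You observe that the whole problem is invariant under an invertible linear map applied simultaneously to all the vectors, and then simply take $T=({}^1W)^{-1}$, so that ${}^1W$ becomes $I$ and $\per(I)=1$. The paper relies on the very same invariance (it notes explicitly that simultaneous row operations on all the ${}^jW$ do not affect the conclusion, since their inverses can be applied to the resulting transversals), but instead of normalizing to the identity it proves a standalone fact: if $\det(A)\neq 0$, then a sequence of elementary row operations (swaps and row additions) turns $A$ into a matrix with nonzero permanent. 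This is established by induction on the size $k$ of a submatrix with nonzero determinant and permanent, using a perpendicularity argument with the vector of permanental cofactors. Your reduction is shorter and loses nothing for the theorem at hand; the paper's route is more laborious but yields, as a byproduct, a mildly interesting lemma about forcing a nonzero permanent via determinant-preserving row operations. Both arguments are valid, and your identification of the $\mathrm{GL}_n$-invariance as the crux of the second half is exactly right.
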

\begin{proof}
For each $j=1,\ldots,n$ let ${}^jW$ be the matrix whose columns are the elements of $B_j$. If $\per({}^1W)\ne 0$, then the right hand side of (\ref{eq8}) is nonzero. It follows that at least one of the terms in the sum on the left hand side of (\ref{eq8}) is nonzero. This term gives $n-1$ transversals that are bases.

It remains to show that the assumption that $\per({}^1W)\ne 0$ is not needed.
It will be shown that if $\det({}^1W)\ne 0$ then it is possible to perform a sequence of row operations on all matrices ${}^1W,\dots,{}^nW$ simultaneously, to obtain matrices ${}^1W',\dots,{}^nW'$ such that $\per({}^1W')\ne 0$. The validity of the theorem is not affected by simultaneous row operations, since the inverse of these row operations can be applied to the transversals obtained from ${}^1W',\dots,{}^nW'$, to yield transversals for $B_1,B_2,\ldots,B_n$.

To define these row operations let $A={}^1W$. Assume that $\per(A)=0$ and let $k<n$ be maximal such that there exists a $k\times k$ sub-matrix of $A$ having nonzero permanent and determinant. Without loss of generality we may assume that this sub-matrix is $A[1,\ldots,k \mid 1,\ldots,k]$.

By exchanging some row $i$, ($i>k$) with row $k+1$, if necessary, we may assume that the determinant of $A_{k+1}:=A[1,\ldots,k+1 \mid 1,\ldots ,k+1]$ is nonzero, otherwise the rank of the first $k+1$ columns is $k$, contradicting the assumption that $\det(A)\ne 0$. By the maximality of $k$ we have $\per(A_{k+1})= 0$.

Let $\vec{u}$ be the $k+1$-vector whose $i$-th coordinate is $per(A[1,\ldots,k  \mid 1,\ldots ,\hat{i},\ldots ,k+1])$. By our assumption on $k$ this is not the zero vector, since its $k+1$ coordinate is $A[1,\ldots,k \mid 1,\ldots,k]$.

Now, for each $i \le k$, let $R_i[k+1]$ be the $i$-th row of $A_{k+1}$ and let $A_{k+1}^{i}$ be the matrix obtained from $A_{k+1}$ by adding the row $R_i[k+1]$ to the row $R_{k+1}[k+1]$. Then, $\per(A_{k+1}^{i})=R_i[k+1]\cdot \vec{u}+\per(A_{k+1})=R_i[k+1]\cdot \vec{u}$ (since we assumed $\per(A_{k+1})= 0$).

If $R_i[k+1]\cdot \vec{u}=0$ for all $i\le k$, then all $R_i[k+1]$ are perpendicular to $\vec{u}$. Since we are assuming that $per(A[1,\ldots,k+1 \mid 1,\ldots,k+1])= 0$,  also $R_{k+1}[k+1]$ is perpendicular to $\vec{u}$. Thus, all the rows of $A_{k+1}$ are perpendicular to $\vec{u}$, contrary to the assumption that $\det(A_{k+1})\ne 0$. Hence, it is possible to perform a row operation on $A_{k+1}$ to obtain a matrix with nonzero permanent. It follows that it is possible to perform a row operation on $A$ to obtain a matrix containing a $(k+1)\times(k+1)$ sub-matrix having nonzero determinant and permanent. Proceeding this way, we obtain an $n\times n$ matrix with nonzero determinant and permanent.
\end{proof}

\section{A `scrambled' version of Rota's conjecture}

Suppose that before trying to find a decomposition into $n$ independent transversals, we scramble the matrices. Is Rota's conjecture still true?
To put this question formally, call a set  $S_1,\ldots,S_n$ of subsets of $\mathbb{R}^n$, each of size $n$, a {\em scrambled system of bases} if $\bigcup S_i$ is the multiset union of $n$ bases. By Rado's theorem, a scrambled system of bases has at least one independent transversal. The question is - does $\bigcup S_i$ necessarily decompose into  $n$ independent transversals?

The answer is ``no'', at least for $n$ odd.

\begin{example}
Let $n$ be odd. Let $S_i~(i=1,\ldots,n)$ consist of $n-1$ copies of the standard basis vector $e_i$, and one copy of $e_{i+1}-e_{i+2}$, where
indices are taken $\bmod n$.
\end{example}

The decomposition into the bases $B_i=\{e_i, e_{i+1}, \ldots, e_{i+n-2}, e_{i+n-1}-e_{i+n-2}\}$ ($i \le n$, indices taken $\bmod n$) shows that this is a scrambled system of bases.

Suppose that $\bigcup S_i$ is decomposed into $n$ linearly independent transversals. The sum of the non-$e_i$ vectors is zero, and hence the set consisting of the non-$e_i$'s is not one of these transversals. Hence there exist at least two transversals containing both $e_i$'s and non-$e_i$'s. Since $n$ is odd, at least one of these transversals contains fewer than $\frac{n}{2}$ non-$e_i$'s. But this entails that this transversal contains vectors $e_i,e_{i+1},e_i-e_{i+1}$, meaning that this transversal is not linearly independent.

In \cite{AB06} it was proved (in a more general setting, of matroids) that a scrambled system of bases can be covered by $2n$ independent transversals, and it was conjectured that in fact $n+1$ independent transversals suffice. Here we suggest that a result similar to the one proved in this paper is true for scrambled systems of bases:

\begin{conj}
A  scrambled system of bases of $\mathbb{R}^n$ contains $n-1$  disjoint independent transversals.
\end{conj}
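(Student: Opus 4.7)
The plan is to extend the approach of Theorem~\ref{thm2} to scrambled systems by deriving a modified colorful determinantal identity adapted to the scrambled setting. The main obstacle is that for a scrambled system the individual matrices ${}^iW$ whose columns are the elements of $S_i$ may all be singular, as in the Example above, so that the right-hand side of~(\ref{eq8}) vanishes and Theorem~\ref{thm1} provides no information. The hypothesis that $\bigcup_i S_i$ is a multiset union of $n$ bases must therefore enter in a way that bypasses the individual $\det({}^iW)$.

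To exploit this hypothesis I would fix a decomposition $\bigcup_iS_i=B_1\sqcup\cdots\sqcup B_n$ into bases and let $\phi:\{1,\ldots,n\}^2\to\{1,\ldots,n\}^2$ be the bijection for which $S_i[j]=B_k[\ell]$ whenever $\phi(k,\ell)=(i,j)$. The target is an identity of the schematic form
\begin{equation*}
\sum_\rho\sgn(\rho)\,\per\bigl(\text{first $S_i$-transversal}\bigr)\prod_{s=2}^n\det\bigl(\text{remaining $S_i$-transversals}\bigr)=c(\phi,n)\,\per({}^1Y)\prod_{k=2}^n\det({}^kY),
\end{equation*}
where ${}^kY$ is the matrix whose columns are the elements of $B_k$ and $c(\phi,n)$ is a combinatorial factor depending only on $n$ and the parity of $\phi$. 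Because the $B_k$ are genuine bases, the right-hand side can be made nonzero by the same row-operation trick used at the end of the proof of Theorem~\ref{thm2}, and a nonzero term on the left then yields the required $n-1$ independent $S_i$-transversals.

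To establish such an identity I would revisit the double-sum computation behind Proposition~\ref{refinedOnn}: introduce an expression $\triangle'=\sum_{\rho,\sigma}(\pm)\prod_{i,j}{}^{j}{W}_{\sigma_i(j)}^{\rho_j(i)}$ and evaluate it in two ways, but reformulate the $\sigma$-sum so that, instead of recovering the columns of ${}^iW$ directly as in~(\ref{eq4}), it recovers the columns of ${}^kY$ through the scrambling $\phi$. The factor $c(\phi,n)$ should then emerge as a signed count of Latin squares whose rows record the $S_i$-transversal assignment and whose symbols record the $B_k$-decomposition, with signs twisted by the parity of $\phi$; an analogue of Lemma~\ref{lem2} evaluating this count in terms of $rels(n)-rols(n)$ and $\sgn(\phi)$ would then close the argument.

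The hard part will be guaranteeing $c(\phi,n)\neq0$: a priori the parity of $\phi$ could combine with the signed Latin-square count to produce total cancellation. Possible remedies are to average over all valid decompositions $\bigcup_iS_i=B_1\sqcup\cdots\sqcup B_n$, or to allow further simultaneous row-operations on the ${}^iW$ that modify $\phi$ while preserving the identity, so that at least one choice yields $c(\phi,n)\neq0$ under the assumption $rels(n)\neq rols(n)$. A wholly different route would be a matroid-theoretic proof in the spirit of~\cite{AB06}, combining Edmonds' matroid partition theorem with a rainbow-type augmenting-path construction on the bipartite graph of blocks $S_i$ versus elements; but extracting the precise threshold $n-1$ (rather than a weaker bound) appears to be the delicate point either way.
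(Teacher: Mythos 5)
This statement is one the paper itself leaves as an open conjecture; there is no proof in the paper to compare yours against, so the only question is whether your proposal actually closes it. It does not, and you say so yourself: the non-vanishing of the constant $c(\phi,n)$ is exactly the content of the conjecture in your formulation, and you offer only speculative remedies (averaging over decompositions, further row operations) without an argument that any of them works. A proposal whose ``hard part'' is left open is a research program, not a proof.

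There is also a more concrete structural obstruction to the identity you want. The engine of Proposition~\ref{refinedOnn} and Theorem~\ref{thm1} is the evaluation of the double sum $\triangle$ in two ways, and the $\sigma$-side evaluation (the step~(\ref{eq4})) factors into $\prod_{j}\det({}^jW)$ only because the $n^2$ vectors carry a bi-indexed structure: the $(i,j)$ entry of the array belongs to the $j$-th matrix, and the inner sum over $\rho_j$ reassembles precisely the $n$ columns of ${}^jW$. In a scrambled system this second coordinate is destroyed: a basis $B_k$ of the underlying decomposition may meet some block $S_i$ several times and other blocks not at all (nothing in the definition forces the $B_k$ to be transversals of the $S_i$), so the positions $\phi(k,1),\ldots,\phi(k,n)$ do not form a row of any permutation-indexed array, and the $\sigma$-sum will not collapse to $\prod_k\det({}^kY)$ times a signed Latin-square count. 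Your Lemma~\ref{lem2}-analogue therefore has nothing to evaluate. Finally, note that the conjecture is stated unconditionally for $\mathbb{R}^n$, while your route would at best give a statement conditional on $rels(n)\neq rols(n)$ (Conjecture~\ref{conjSW}), itself open for odd $n>7$; so even a successful execution of your plan would prove a weaker statement than the one asserted.
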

{\bf Acknowledgment:} The authors are indebted to Martin Loebl and Ran Ziv for fruitful discussions.

\end{document}